\numberwithin{equation}{subsection}
\theoremstyle{plain}
\newtheorem{thm}{Theorem}[section]
\newtheorem{lem}[thm]{Lemma}
\newtheorem{cor}[thm]{Corollary}
\theoremstyle{definition}
\newtheorem{ques}[thm]{Question}
\theoremstyle{remark}
\newtheorem{rmk}[thm]{Remark}
\newtheorem{exmp}[thm]{Example}
\begin{document}



\newcommand{\id}{\mathrm{id}}
\newcommand{\into}{\hookrightarrow}
\newcommand{\onto}{\twoheadrightarrow}
\newcommand{\otspam}{\mathbin{\reflectbox{$\mapsto$}}}

\newcommand{\DD}{\mathbb{D}}
\newcommand{\fbar}{\overline{f}}
\newcommand{\gobar}{\overline{\omega}}


\newcommand{\e}{\mathrm{e}}
\newcommand{\xbar}{\bar{x}}
\newcommand{\ybar}{\bar{y}}
\newcommand{\vp}{\mathbf{p}}
\newcommand{\vw}{\mathbold{w}}
\newcommand{\vx}{\mathbf{x}}
\newcommand{\vy}{\mathbf{y}}
\newcommand{\vga}{\mathbold{\alpha}}
\newcommand{\vgb}{\mathbold{\beta}}
\newcommand{\vxi}{\mathbold{\xi}}
\newcommand{\im}{\operatorname{im}}
\newcommand{\coker}{\operatorname{coker}}
\newcommand{\Hom}{\operatorname{Hom}}
\newcommand{\End}{\operatorname{End}}
\newcommand{\res}{\operatorname{res}}
\newcommand{\Ann}{\operatorname{Ann}}
\newcommand{\Spec}{\operatorname{Spec}}
\newcommand{\Proj}{\operatorname{Proj}}
\newcommand{\oo}{\mathscr{O}}
\newcommand{\Pic}{\operatorname{Pic}}
\newcommand{\acton}{\mathbin{\rotatebox[origin=c]{-90}{$\circlearrowleft$}}}
\newcommand{\actedonby}{\mathbin{\rotatebox[origin=c]{90}{$\circlearrowright$}}}
\newcommand{\scr}[1]{\mathscr{#1}}
\newcommand{\obj}{\operatorname{obj}}
\newcommand{\Ad}{\operatorname{Ad}}
\newcommand{\ad}{\operatorname{ad}}
\newcommand{\LgrpGL}{\mathrm{GL}}
\newcommand{\LgrpSL}{\mathrm{SL}}
\newcommand{\LgrpO}{\mathrm{O}}
\newcommand{\LgrpSO}{\mathrm{SO}}
\newcommand{\LgrpSp}{\mathrm{Sp}}
\newcommand{\LgrpU}{\mathrm{U}}
\newcommand{\LgrpSU}{\mathrm{SU}}
\newcommand{\LgrpPGL}{\mathrm{PGL}}
\newcommand{\LgrpPSL}{\mathrm{PSL}}


\newcommand{\ga}{\alpha} 
\newcommand{\gb}{\beta} 
\newcommand{\gc}{\gamma} \newcommand{\gC}{\Gamma}
\newcommand{\gd}{\delta} \newcommand{\gD}{\Delta}
\newcommand{\gve}{\varepsilon} \renewcommand{\ge}{\epsilon} 
\newcommand{\gi}{\iota} 
\newcommand{\gk}{\kappa} 
\newcommand{\gl}{\lambda} \newcommand{\gL}{\Lambda}
\newcommand{\go}{\omega} \newcommand{\gO}{\Omega}
\newcommand{\gvp}{\varphi} 
\newcommand{\gr}{\rho} 
\newcommand{\gs}{\sigma} \newcommand{\gS}{\Sigma}
\newcommand{\gth}{\theta} \newcommand{\gTh}{\Theta}
\newcommand{\gu}{\upsilon} 
\newcommand{\gz}{\zeta} 

\newcommand{\bbA}{\mathbb{A}}
\newcommand{\bbB}{\mathbb{B}}
\newcommand{\bbC}{\mathbb{C}}
\newcommand{\bbD}{\mathbb{D}}
\newcommand{\bbE}{\mathbb{E}}
\newcommand{\bbF}{\mathbb{F}}
\newcommand{\bbG}{\mathbb{G}}
\newcommand{\bbH}{\mathbb{H}}
\newcommand{\bbI}{\mathbb{I}}
\newcommand{\bbJ}{\mathbb{J}}
\newcommand{\bbK}{\mathbb{K}}
\newcommand{\bbL}{\mathbb{L}}
\newcommand{\bbM}{\mathbb{M}}
\newcommand{\bbN}{\mathbb{N}}
\newcommand{\bbO}{\mathbb{O}}
\newcommand{\bbP}{\mathbb{P}}
\newcommand{\bbQ}{\mathbb{Q}}
\newcommand{\bbR}{\mathbb{R}}
\newcommand{\bbS}{\mathbb{S}}
\newcommand{\bbT}{\mathbb{T}}
\newcommand{\bbU}{\mathbb{U}}
\newcommand{\bbV}{\mathbb{V}}
\newcommand{\bbW}{\mathbb{W}}
\newcommand{\bbX}{\mathbb{X}}
\newcommand{\bbY}{\mathbb{Y}}
\newcommand{\bbZ}{\mathbb{Z}}

\newcommand{\calA}{\mathcal{A}}
\newcommand{\calB}{\mathcal{B}}
\newcommand{\calC}{\mathcal{C}}
\newcommand{\calD}{\mathcal{D}}
\newcommand{\calE}{\mathcal{E}}
\newcommand{\calF}{\mathcal{F}}
\newcommand{\calG}{\mathcal{G}}
\newcommand{\calH}{\mathcal{H}}
\newcommand{\calI}{\mathcal{I}}
\newcommand{\calJ}{\mathcal{J}}
\newcommand{\calK}{\mathcal{K}}
\newcommand{\calL}{\mathcal{L}}
\newcommand{\calM}{\mathcal{M}}
\newcommand{\calN}{\mathcal{N}}
\newcommand{\calO}{\mathcal{O}}
\newcommand{\calP}{\mathcal{P}}
\newcommand{\calQ}{\mathcal{Q}}
\newcommand{\calR}{\mathcal{R}}
\newcommand{\calS}{\mathcal{S}}
\newcommand{\calT}{\mathcal{T}}
\newcommand{\calU}{\mathcal{U}}
\newcommand{\calV}{\mathcal{V}}
\newcommand{\calW}{\mathcal{W}}
\newcommand{\calX}{\mathcal{X}}
\newcommand{\calY}{\mathcal{Y}}
\newcommand{\calZ}{\mathcal{Z}}

\newcommand{\scrA}{\mathscr{A}}
\newcommand{\scrB}{\mathscr{B}}
\newcommand{\scrC}{\mathscr{C}}
\newcommand{\scrD}{\mathscr{D}}
\newcommand{\scrE}{\mathscr{E}}
\newcommand{\scrF}{\mathscr{F}}
\newcommand{\scrG}{\mathscr{G}}
\newcommand{\scrH}{\mathscr{H}}
\newcommand{\scrI}{\mathscr{I}}
\newcommand{\scrJ}{\mathscr{J}}
\newcommand{\scrK}{\mathscr{K}}
\newcommand{\scrL}{\mathscr{L}}
\newcommand{\scrM}{\mathscr{M}}
\newcommand{\scrN}{\mathscr{N}}
\newcommand{\scrO}{\mathscr{O}}
\newcommand{\scrP}{\mathscr{P}}
\newcommand{\scrQ}{\mathscr{Q}}
\newcommand{\scrR}{\mathscr{R}}
\newcommand{\scrS}{\mathscr{S}}
\newcommand{\scrT}{\mathscr{T}}
\newcommand{\scrU}{\mathscr{U}}
\newcommand{\scrV}{\mathscr{V}}
\newcommand{\scrW}{\mathscr{W}}
\newcommand{\scrX}{\mathscr{X}}
\newcommand{\scrY}{\mathscr{Y}}
\newcommand{\scrZ}{\mathscr{Z}}

\newcommand{\frA}{\mathfrak{A}}
\newcommand{\frB}{\mathfrak{B}}
\newcommand{\frC}{\mathfrak{C}}
\newcommand{\frD}{\mathfrak{D}}
\newcommand{\frE}{\mathfrak{E}}
\newcommand{\frF}{\mathfrak{F}}
\newcommand{\frG}{\mathfrak{G}}
\newcommand{\frH}{\mathfrak{H}}
\newcommand{\frI}{\mathfrak{I}}
\newcommand{\frJ}{\mathfrak{J}}
\newcommand{\frK}{\mathfrak{K}}
\newcommand{\frL}{\mathfrak{L}}
\newcommand{\frM}{\mathfrak{M}}
\newcommand{\frN}{\mathfrak{N}}
\newcommand{\frO}{\mathfrak{O}}
\newcommand{\frP}{\mathfrak{P}}
\newcommand{\frQ}{\mathfrak{Q}}
\newcommand{\frR}{\mathfrak{R}}
\newcommand{\frS}{\mathfrak{S}}
\newcommand{\frT}{\mathfrak{T}}
\newcommand{\frU}{\mathfrak{U}}
\newcommand{\frV}{\mathfrak{V}}
\newcommand{\frW}{\mathfrak{W}}
\newcommand{\frX}{\mathfrak{X}}
\newcommand{\frY}{\mathfrak{Y}}
\newcommand{\frZ}{\mathfrak{Z}}
\newcommand{\fra}{\mathfrak{a}}
\newcommand{\frb}{\mathfrak{b}}
\newcommand{\frc}{\mathfrak{c}}
\newcommand{\frd}{\mathfrak{d}}
\newcommand{\fre}{\mathfrak{e}}
\newcommand{\frf}{\mathfrak{f}}
\newcommand{\frg}{\mathfrak{g}}
\newcommand{\frh}{\mathfrak{h}}
\newcommand{\fri}{\mathfrak{i}}
\newcommand{\frj}{\mathfrak{j}}
\newcommand{\frk}{\mathfrak{k}}
\newcommand{\frl}{\mathfrak{l}}
\newcommand{\frm}{\mathfrak{m}}
\newcommand{\frn}{\mathfrak{n}}
\newcommand{\fro}{\mathfrak{o}}
\newcommand{\frp}{\mathfrak{p}}
\newcommand{\frq}{\mathfrak{q}}
\newcommand{\frr}{\mathfrak{r}}
\newcommand{\frs}{\mathfrak{s}}
\newcommand{\frt}{\mathfrak{t}}
\newcommand{\fru}{\mathfrak{u}}
\newcommand{\frv}{\mathfrak{v}}
\newcommand{\frw}{\mathfrak{w}}
\newcommand{\frx}{\mathfrak{x}}
\newcommand{\fry}{\mathfrak{y}}
\newcommand{\frz}{\mathfrak{z}}
\newcommand{\frgl}{\mathfrak{gl}}
\newcommand{\frsl}{\mathfrak{sl}}
\newcommand{\frso}{\mathfrak{so}}
\newcommand{\frsp}{\mathfrak{sp}}
\newcommand{\frsu}{\mathfrak{su}}

\title[]{Stability of the canonical extension of tangent bundles on Picard-rank-1 Fano varieties}
\author{Kuang-Yu Wu}
\address{Department of Mathematics, Statistics, and Computer Science, University of Illinois at Chicago, Chicago, IL 60607}
\email{kwu33@uic.edu}


\newcommand{\Ext}{\operatorname{Ext}}

\theoremstyle{plain}
\newtheorem*{thmA}{Theorem A}
\newtheorem*{corB}{Corollary B}

\begin{abstract}
	We consider slope stability of the canonical extension of the tangent bundle by the trivial line bundle and with the extension class $c_1(\calT_X)$ on Picard-rank-$1$ Fano varieties.
	In cases where the index divides the dimension or the dimension plus one,
	we show that stability of the tangent bundle implies (semi)stability of the canonical extension.
	One consequence of our result is that the canonical extensions on moduli spaces of stable vector bundles with a fixed determinant on a curve are at least semistable,
	and stable in some cases.
\end{abstract}

\maketitle

\section{Introduction}

We work throughout over $\bbC$.
Let $X$ be a smooth Fano variety.
If $X$ admits a K\"ahler-Einstein metric,
it is well known that the tangent bundle $\calT_X$ of $X$
is slope polystable with respect to the anticanonical line bundle $-K_X$
by the Kobayashi-Hitchin correspondence \cite{Kob82,Lub83}.
In addition to this,
Tian showed in \cite{Tia92} that the canonical extension of the tangent bundle $\calT_X$ by the trivial line bundle $\calO_X$
$$
0 \to \calO_X \to \calE_X \to \calT_X \to 0
$$
defined by the extension class $c_1(X)$
is also slope polystable with respect to $-K_X$.
See also \cite{Li21,DGP20} for generalizations of this result.

The first Chern class $c_1(X)$ of $X$ lies in the $(1,1)$-Dolbeault cohomology $H^{1,1}(X)$,
which is isomorphic to the sheaf cohomology $H^1(X,\gO^1_X)$ by Dolbeault's theorem.
Since we have
$$
H^1 (X,\gO^1_X) \cong \operatorname{Ext}^1(\calO_X,\gO^1_X) \cong \operatorname{Ext}^1(\calT_X,\calO_X)
$$
by for example \cite[Propositions 6.3 and 6.7]{Har77},
$c_1(X)$ can be viewed as an extension class in $\operatorname{Ext}^1(\calT_X,\calO_X)$.
We will call the extension $\calE_X$ defined by $c_1(X)$ the \textit{canonical extension} on $X$.
One example is the Euler sequence
$$
0 \to \calO_{\bbP^n} \to \calO_{\bbP^n} (1)^{\oplus (n + 1)} \to \calT_{\bbP^n} \to 0
$$
on projective spaces $\bbP^n$.

By slope stability,
we mean the following.
Fix an ample line bundle $\calL$ on $X$,
and let $n := \dim(X)$.
Given a torsion free coherent sheaf $\calF$ on $X$,
define its \textit{slope} with respect to $\calL$ by
$$
\mu_{\calL} (\calF) := \frac{c_1(\calF) \cdot c_1(\calL)^{n - 1}}{\rank(\calF)}
$$
Then we call $\calF$ 
\begin{itemize}
	\item \textit{slope stable} if $\mu_{\calL} (\calG) < \mu_{\calL} (\calF)$
	for all non-zero proper subsheaves $0 \neq \calG \subsetneq \calF$,
	
	\item \textit{slope semistable} if $\mu_{\calL} (\calG) \leq \mu_{\calL} (\calF)$
	for all non-zero proper subsheaves $0 \neq \calG \subsetneq \calF$, and
	
	\item \textit{slope polystable} if $\calF$ is a direct sum of stable sheaves of the same slope.
\end{itemize}
It is straightforward from the definition
that slope stability implies slope polystability
and that slope polystability implies slope semistability.

\subsection{Main result and idea of the proof}

The \textit{index} of a smooth Fano variety $X$ is defined to be the largest integer $i$
such that $-K_X = \calL^{\otimes i}$ for some ample line bundle $\calL$ on $X$.

Our main result is the following.

\begin{thmA}[= Theorem \ref{thm:stabofE_X}]
	Let $X$ be a smooth Picard-rank-$1$ Fano variety of dimension $n$ and index $i$.
	\begin{itemize}
		\item[(a).]
		Suppose $i$ divides $n$
		and that the tangent bundle $\calT_X$ is slope stable with respect to $-K_X$.
		Then the canonical extension $\calE_X$ is slope stable with respect to $-K_X$.
		
		\item[(b).]
		Suppose $i$ divides $n+1$
		and that the tangent bundle $\calT_X$ is slope semistable with respect to $-K_X$.
		Then the canonical extension $\calE_X$ is slope semistable with respect to $-K_X$.
	\end{itemize}
\end{thmA}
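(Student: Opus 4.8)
The plan is to test slope (semi)stability of $\calE_X$ on saturated subsheaves and to split the analysis according to how such a subsheaf meets the sub-line-bundle $\calO_X$. First I fix notation: since $\Pic X \cong \bbZ$, let $H$ be the ample generator of $N^1(X)$, so that $c_1(\calL) = H$, $\,-K_X = c_1(\calT_X) = iH$, and $c_1$ of every coherent sheaf is an integer multiple of $H$; write $c_1(\calF) = d(\calF)\,H$. Slope (semi)stability with respect to $-K_X$ and with respect to $\calL$ agree, and $\mu_\calL(\calF) = \tfrac{d(\calF)}{\rank\calF}H^n$, so comparisons reduce to the reduced slopes $d(\calF)/\rank\calF$, with targets $d(\calE_X)/\rank\calE_X = i/(n+1)$ and $d(\calT_X)/\rank\calT_X = i/n$; note that $\mu_\calL(\calE_X) < \mu_\calL(\calT_X)$. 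Since (semi)stability may be tested on saturated subsheaves, and a saturated proper subsheaf of $\calE_X$ necessarily has rank $\le n$, it suffices to prove, for every saturated $\calF \subsetneq \calE_X$ of rank $r$ with $1 \le r \le n$, that $d(\calF)(n+1) < ir$ in case (a) and $d(\calF)(n+1) \le ir$ in case (b).

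Given such an $\calF$, let $\calK = \calF \cap \calO_X$ and let $\calG \subseteq \calT_X$ be the image of $\calF$ under the projection $\pi\colon \calE_X \to \calT_X$, so that $0 \to \calK \to \calF \to \calG \to 0$. Because $\calF$ is saturated, $\calE_X/\calF$ is torsion free; as $\calO_X/\calK$ embeds into it, $\calO_X/\calK$ is torsion free, forcing $\calK = 0$ or $\calK = \calO_X$. In the first case ($\calO_X \subseteq \calF$), the sheaf $\calG = \calF/\calO_X$ is a proper subsheaf of $\calT_X$ of rank $g = r-1 \le n-1$ with $\deg\calF = \deg\calG$; feeding the (semi)stability bound $\mu_\calL(\calG) \le \mu_\calL(\calT_X)$ into $\mu_\calL(\calF) = \deg\calG/r$ gives $\mu_\calL(\calF) < \mu_\calL(\calE_X)$ via the elementary inequality $g(n+1) < n(g+1)$ valid for $g < n$. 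This case uses only semistability of $\calT_X$ and settles both parts.

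The crux is the second case, $\calF \cap \calO_X = 0$, where $\pi$ restricts to an isomorphism $\calF \xrightarrow{\sim} \calG$, so that $\mu_\calL(\calF) = \mu_\calL(\calG) \le \mu_\calL(\calT_X)$; since $\mu_\calL(\calT_X) > \mu_\calL(\calE_X)$, stability of $\calT_X$ alone is insufficient and the index hypothesis must be used. Writing $d = d(\calG)$, the target is $d(n+1) < ir$ (resp.\ $\le$), while (semi)stability of $\calT_X$ gives $dn < ir$ (resp.\ $dn \le ir$). When $\calG \subsetneq \calT_X$ is proper I argue by contradiction: assume the target fails and combine $d(n+1) \ge ir$ (resp.\ $>$) with the $\calT_X$-bound and integrality. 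In case (a), using $i \mid n$ (write $n = im$) the two inequalities force $r \ge dm + 1$ together with $d < i$, whence $d \ge i(r-dm) \ge i$, a contradiction. In case (b), using $i \mid n+1$ (write $n+1 = is$) the inequalities force first $d \ge i$ and then $r = n$, $d = i$; that is, $\calG \subseteq \calT_X$ is a full-rank subsheaf with $c_1(\calG) = c_1(\calT_X)$, the one boundary configuration that survives the numerics.

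It remains to eliminate the split-type configurations, which is where non-splitness of the canonical extension enters. If $\calG = \calT_X$ then $\pi|_\calF$ is an isomorphism, producing a section of $\pi$ and hence a splitting of $0 \to \calO_X \to \calE_X \to \calT_X \to 0$, impossible since its class $c_1(X) = iH \neq 0$. For the boundary case of part (b), $\calT_X/\calG$ is torsion supported in codimension $\ge 2$, so $\Ext^1(\calT_X/\calG,\calO_X) = 0$ and the restriction $\Ext^1(\calT_X,\calO_X) \to \Ext^1(\calG,\calO_X)$ is injective; but the embedding $\calF \cong \calG \hookrightarrow \calE_X$ splits the pullback of the canonical extension along $\calG \hookrightarrow \calT_X$, forcing the image of $c_1(X)$ to vanish and contradicting injectivity together with $c_1(X) \neq 0$. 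I expect this second case --- reconciling the divisibility and integrality bookkeeping with the cohomological non-splitness input --- to be the main obstacle, whereas the reduction and the first case are routine. (The excluded index value $i = n+1$ occurs only for $X = \bbP^n$, where the canonical extension is the Euler bundle $\calO_{\bbP^n}(1)^{\oplus(n+1)}$, visibly polystable, so the statement is consistent there.)
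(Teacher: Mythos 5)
Your proposal is correct in outline and takes a genuinely different route from the paper, though with one patchable slip in part (a) noted below. The shared skeleton is the same: project a destabilizer into $\calT_X$ and use the divisibility hypothesis to convert the slope inequality into an integrality bound on the rank. The difference is how the crucial estimate $d < i$ is obtained. The paper's engine is its Lemma 2.1 --- \emph{every} proper subsheaf of $\calE_X$ has degree $< i$ --- proved by dualizing and invoking Kodaira vanishing, Nakano vanishing, and Koll\'ar's vanishing $H^0(Y,(\Omega^1_Y)^{\otimes m})=0$ for rationally connected $Y$; the kernel of the projection is then only estimated ($\deg \leq 0$). You avoid all of these vanishing theorems: saturating $\calF$ forces $\calF \cap \calO_X$ to be exactly $0$ or $\calO_X$, so slopes transfer exactly; in the transverse case $d < i$ follows from (semi)stability of $\calT_X$ itself via $dn < ir \leq in$; and the split-type configurations ($\calG = \calT_X$, and the boundary $r = n$, $d = i$) are killed by non-splitness of the extension, the latter through $\Ext^1(T,\calO_X) = 0$ for torsion $T$ supported in codimension $\geq 2$ and the resulting injectivity of $\Ext^1(\calT_X,\calO_X) \to \Ext^1(\calG,\calO_X)$. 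That boundary elimination is your genuinely new ingredient (the paper's non-splitness step, its Case 3, is used only against rank-one subsheaves of $\calE_X^{\vee}$). The trade-off: the paper's lemma needs no hypothesis on $\calT_X$ and immediately yields its Corollary B (stability of $\calE_X$ whenever $i = 1$), which your argument cannot recover, since you use (semi)stability of $\calT_X$ even to bound $d$.

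The gap to fix is in part (a), Case B1 with $r = n$. A full-rank proper subsheaf $\calG \subsetneq \calT_X$ is not tested by slope stability under any workable convention --- note the paper's literal ``all nonzero proper subsheaves'' definition is vacuous, since $\calT_X \otimes \calI_p \subsetneq \calT_X$ has the same slope --- so your inequality $dn < ir$ is unavailable when $r = n$, and your chain $r \geq dm + 1$ fails precisely at the configuration $(r,d) = (n,i)$ (there $dm = n = r$). In part (b) you are safe, because at $r = n$ the non-strict bound $dn \leq ir$ amounts to $d \leq i$, which holds automatically ($\calT_X/\calG$ is torsion, hence has effective $c_1$), but in part (a) you need strictness. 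The patch uses only tools already in your proposal: for full-rank $\calG$ one has $d \leq i$ automatically; if $d \leq i-1$ then $d(n+1) \leq (i-1)(n+1) < in = ir$, using $i \leq n$ (which follows from $i \mid n$); and $d = i$ is exactly the boundary configuration you eliminate in part (b) --- your Ext-injectivity argument uses only $\calK = 0$, $r = n$, and $c_1(\calT_X/\calG) = 0$, so it applies verbatim in part (a). A trivial loose end of the same kind sits in Case A: when $r = 1$ the image $\calG$ is a rank-zero subsheaf of the torsion-free $\calT_X$, hence zero, so $\calF = \calO_X$ with slope $0 < i/(n+1)$, and the semistability bound on $\calG$ is never invoked. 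With these two patches your proof is complete.
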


The idea of our proof is to show that
given a destabilizing subsheaf $\calF \subset \calE_X$,
its image in $\calT_X$ is also destabilizing.
The key lemma (Lemma \ref{lem:subbdlofE_X}) is that
all proper subsheaves of $\calE_X$ have strictly lower degrees than $\calE_X$,
which we prove by using various vanishing theorems.
We remark that Lemma \ref{lem:subbdlofE_X} is analogous to \cite[Theorem 3]{Rei77}
(see also \cite[Proposition 2.2]{PW95} and \cite[Lemma 2.2]{Iye14}),
and that similarly it has the following implication,
of which a different proof can be found in \cite[Corollary 6.5]{GKP22}.

\begin{corB}[= Corollary \ref{cor:i=1}]
	Let $X$ be a smooth Picard-rank-$1$ Fano variety of dimension $n$ and index $1$.
	Then the canonical extension $\calE_X$ is slope stable with respect to $-K_X$.
\end{corB}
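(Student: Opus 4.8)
The plan is to deduce the corollary directly from Lemma \ref{lem:subbdlofE_X}, using only the rigidity of the Picard group. Since $X$ has Picard rank $1$ and is Fano, its Picard group is the free abelian group $\Pic(X) = \bbZ \cdot \calL$ generated by a single ample line bundle $\calL$, and the index-$1$ hypothesis means precisely that $-K_X = \calL$. Thus the polarization is $\calL = -K_X$, and the determinant of any coherent subsheaf $\calF \subseteq \calE_X$ has the form $\det \calF = \calL^{\otimes k}$ for some integer $k$; in particular
$$
\deg_{-K_X}(\calF) = c_1(\calF) \cdot c_1(-K_X)^{n-1} = k\,(-K_X)^n
$$
is an integer multiple of $d := (-K_X)^n > 0$. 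This quantization is the structural fact I will exploit.

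First I would record the slope of $\calE_X$. From $0 \to \calO_X \to \calE_X \to \calT_X \to 0$ we get $\rank(\calE_X) = n+1$ and $c_1(\calE_X) = c_1(\calT_X) = c_1(-K_X)$, so
$$
\mu_{-K_X}(\calE_X) = \frac{(-K_X)^n}{n+1} = \frac{d}{n+1} > 0 .
$$
Since slope stability may be tested on saturated proper subsheaves (saturating increases degree without changing rank, and a saturated full-rank subsheaf is the whole sheaf), it suffices to show $\mu_{-K_X}(\calF) < \mu_{-K_X}(\calE_X)$ for every subsheaf $\calF$ with $0 < \rank(\calF) \le n$.

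Now I would invoke the key lemma. By Lemma \ref{lem:subbdlofE_X}, such an $\calF$ has strictly smaller degree than $\calE_X$, i.e. $\deg_{-K_X}(\calF) < \deg_{-K_X}(\calE_X) = d$. Writing $\deg_{-K_X}(\calF) = k\,d$ with $k \in \bbZ$ as above, and using $d > 0$, the strict inequality $k\,d < d$ forces $k \le 0$, hence $\deg_{-K_X}(\calF) \le 0$. Therefore
$$
\mu_{-K_X}(\calF) = \frac{\deg_{-K_X}(\calF)}{\rank(\calF)} \le 0 < \frac{d}{n+1} = \mu_{-K_X}(\calE_X),
$$
and as $\calF$ was arbitrary, $\calE_X$ is slope stable with respect to $-K_X$.

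The real content lies entirely in Lemma \ref{lem:subbdlofE_X}; granting it, the corollary is essentially a one-line deduction, and the only point I would flag as needing care is the passage from degrees to slopes. The lemma's conclusion ``strictly smaller degree than $\calE_X$'' is on its own far too weak to bound the slope of a low-rank subsheaf. It is exactly the Picard-rank-$1$ quantization of $\deg_{-K_X}(\calF)$ into multiples of $d$, together with the positivity of $d = (-K_X)^n$, that upgrades the strict degree inequality into the uniform bound $\deg_{-K_X}(\calF) \le 0$, and hence into the slope inequality required for stability. I would be sure to state this quantization and the sign of $d$ explicitly.
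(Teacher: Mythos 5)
Your proof is correct and follows essentially the same route as the paper: apply Lemma \ref{lem:subbdlofE_X} to get $\deg(\calF) < i = 1$, use integrality of the degree (which the paper builds directly into its definition of $\deg$ via $c_1(\calF) = \deg(\calF)\cdot c_1(\calL)$, the ``quantization'' you re-derive) to conclude $\deg(\calF) \le 0$, and compare with the positive slope of $\calE_X$. Your saturation reduction to subsheaves of rank at most $n$ is harmless but unnecessary, since the lemma applies to all proper subsheaves, including those of full rank.
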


Our result partially answers the following question raised in \cite{GKP22}.

\begin{ques} \cite[Problem 6.6]{GKP22}
	Let $X$ be a smooth Picard-rank-$1$ Fano variety
	whose tangent bundle $\calT_X$ is slope (semi)stable with respect to $-K_X$.
	Is the canonical extension $\calE_X$ slope (semi)stable with respect to $-K_X$ as well?
\end{ques}

The following example shows that the Picard-rank-$1$ assumption is necessary in our results.

\begin{exmp}
	Let $\gvp : X \to \bbP^2$ be the blow-up of $\bbP^2$ at a point.
	The Picard group of $X$ has rank $2$
	and is generated by $H := \gvp^*\calO_{\bbP^2}(1)$ and the exceptional divisor class $E$,
	with the intersection product given by $H^2 = -E^2 = 1$ and $H.E = 0$.
	The canonical line bundle of $X$ is given by $K_X = \calO_X (- 3H + E)$,
	and hence $X$ is a Fano surface of index $1$.
	
	The tangent bundle $\calT_X$ on $X$ is slope semistable (but not stable) with respect to $-K_X$ \cite{Fah89}.
	However,
	the canonical extension $\calE_X$ on $X$ is not slope semistable with respect to $-K_X$ \cite[Theorem 3.2]{Tia92}.
	In fact,
	$\calE_X$ contains $\calO_X(H)$ as a destabilizing subsheaf;
	as one may check,
	$$
	\mu_{-K_X} (\calO_X(H)) = \frac{H.(3H - E)}{1} = 3 > \frac{8}{3} = \frac{(3H - E)^2}{3} = \mu_{-K_X} (\calE_X) \, .
	$$
	
\end{exmp}

\begin{rmk}
Since $X$ is a toric variety,
another way to see that $\calT_X$ is slope semistable but $\calE_X$ is not
is to check that the corresponding polytope $P_X$ of $X$
satisfies the ``subspace concentration condition'' but not the ``affine subspace concentration condition'' \cite{HNS22,Wu22}.
\end{rmk}

Although the assumption of slope (semi-)stability of the tangent bundle $\calT_X$ might seem strong,
there are a number of Picard-rank-$1$ Fano varieties that have been proved to have slope stable tangent bundles;
see Section \ref{sec:app} and \cite{Sub91,PW95,Hwa98,HM99,Hwa00,BS05,Bis10,Iye14,Liu18,Kan21}.
In fact, it was conjectured that all Picard-rank-$1$ Fano varieties have slope (semi)stable tangent bundles
until counterexamples were found by \cite{Kan21}.
See also \cite{Fah89,Tia92,Ste96} for related works on stability of tangent bundles on Fano varieties,
and \cite{DDK19,BDGP21,HNS22} for works on stability of tangent bundles on Fano toric varieties.

The canonical extensions and their slope stability does not seem to be as well-studied in the literature to the best of our knowledge.
See \cite{GKP22} for related works on slope stability of the canonical extension on Fano varieties,
and \cite{Wu22} for related works on slope stability of the canonical extension on Fano toric varieties.
See also \cite{GW20,HP21,Iwa22,Mul22} for other works on the canonical extensions.

\subsection{Applications of the main result} \label{sec:app}

\subsubsection{Moduli spaces of stable vector bundles with a fixed determinant}

As an application of our result,
we consider the moduli space $M := \calS \calU_C (r , \eta)$ of stable vector bundles of rank $r$ with a fixed determinant $\eta$ of degree $d$ on a curve $C$ of genus $g \geq 3$.
Assume that $r$ is coprime to $d$.
In this case,
$M$ is a smooth Picard-rank-$1$ Fano variety of dimension $n = (r^2 - 1) (g - 1)$ and index $2$ \cite{Ran73}.

It was proved in \cite{Hwa00} that the tangent bundle $\calT_M$ on $M$
is slope stable with respect to $-K_M$
if $r = 2$ (and $d$ is odd).
This result was later generalized in \cite{Iye14} to arbitrary rank $r$.
Thus,
our result implies the following.

\begin{cor}
	The canonical extension $\calE_M$ on $M := \calS \calU_C (r , \eta)$
	is slope semistable with respect to $-K_M$.
	If in addition at least one of $g$ and $r$ is odd,
	then $\calE_M$ is slope stable with respect to $-K_M$.
\end{cor}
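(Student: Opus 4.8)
The plan is to apply Theorem A directly to $M$, after recording the relevant numerical data and reducing both conclusions to a single parity computation. The corollary is therefore essentially a corollary in the literal sense: almost all the work has already been carried out, and what remains is to check which of the two divisibility regimes of Theorem A governs a given pair $(r,g)$.

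First I would assemble the hypotheses. By \cite{Ran73}, $M = \calS\calU_C(r,\eta)$ is a smooth Picard-rank-$1$ Fano variety of dimension $n = (r^2-1)(g-1)$ and index $i = 2$, so $M$ lies within the scope of Theorem A. For the stability input, I would invoke \cite{Iye14} (which generalizes the rank-$2$ result of \cite{Hwa00}): the tangent bundle $\calT_M$ is slope stable with respect to $-K_M$. Since slope stability implies slope semistability, both of the hypotheses required by the two parts of Theorem A are in hand.

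Next I would decide which part of Theorem A applies, which amounts to computing the parity of $n$. Because $i = 2$, part (a) applies exactly when $n$ is even, and part (b) applies exactly when $n$ is odd (equivalently $n+1$ is even). Writing $n = (r^2-1)(g-1)$ with $r^2-1 = (r-1)(r+1)$, the factor $r^2-1$ is even precisely when $r$ is odd, and the factor $g-1$ is even precisely when $g$ is odd. Hence $n$ is even if and only if at least one of $r$ and $g$ is odd, and $n$ is odd if and only if both $r$ and $g$ are even.

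Finally I would assemble the conclusion. If at least one of $g$ and $r$ is odd, then $2 \mid n$, and Theorem A (a) together with slope stability of $\calT_M$ yields that $\calE_M$ is slope stable with respect to $-K_M$. If both $g$ and $r$ are even, then $n$ is odd, so $2 \mid n+1$, and Theorem A (b) together with slope semistability of $\calT_M$ yields that $\calE_M$ is slope semistable with respect to $-K_M$. In either case $\calE_M$ is at least slope semistable, giving the first assertion, and it is slope stable exactly when $g$ or $r$ is odd, giving the second. I do not expect any genuine obstacle: the only real content is matching the parity of $(r^2-1)(g-1)$ against the divisibility conditions $2 \mid n$ versus $2 \mid n+1$, since the substantive analytic and cohomological work is already absorbed into Theorem A and into the cited stability theorems for $\calT_M$.
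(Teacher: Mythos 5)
Your proposal is correct and is exactly the argument the paper intends: it cites \cite{Ran73} for $M$ being a smooth Picard-rank-$1$ Fano variety of dimension $n=(r^2-1)(g-1)$ and index $2$, cites \cite{Hwa00,Iye14} for stability of $\calT_M$, and then applies Theorem A, with your parity computation (that $n$ is even if and only if $g$ or $r$ is odd, so part (a) gives stability in that case and part (b) gives semistability otherwise) being precisely the step the paper leaves implicit. No gaps; the reduction to the parity of $(r^2-1)(g-1)$ is the whole content of the corollary.
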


This provides an extra evidence that
these moduli spaces might admit K\"ahler-Einstein metric,
although whether this is true remains an open question.

\subsubsection{Low-dimensional Picard-rank-1 Fano varieties}

Let $X$ be a smooth Picard-rank-$1$ Fano variety of dimension $n$ and index $i$.
We consider the cases where $n = 2 , 3$.

By a well-known result of Kobayashi-Ochiai \cite{KO72},
the index $i$ is at most $n + 1$,
and $X \cong \bbP^n$ if $i$ is exactly $n + 1$.
The canonical extension on $\bbP^n$ is $\calO_{\bbP^n} (1)^{\oplus (n + 1)}$,
which is slope polystable.
Also,
if $i = 1$,
then the canonical extension $\calE_X$ is slope stable by Corollary B.
Thus,
we may assume $1 < i < n + 1$.

Then,
the possible values for the index are $i = 2$ if $n = 2$,
and $i = 2 , 3$ if $n = 3$.
One can check the hypotheses we need,
namely $i$ divides $n$ or $n + 1$,
holds in all cases.
Furthermore,
by \cite[Theorem 3(1)]{PW95}
the tangent bundle $\calT_X$ is slope stable since $i \geq n - 1$ in all cases.
Our main theorem then applies.

\begin{cor}
	Let $X$ be a smooth Picard-rank-$1$ Fano variety of dimension $n \leq 3$.
	Then $\calE_X$ is slope semistable with respect to $-K_X$.
\end{cor}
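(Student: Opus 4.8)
The plan is to reduce the statement entirely to Theorem A via a short finite case analysis, after first removing the two extreme values of the index. I would begin with the Kobayashi-Ochiai bound \cite{KO72}, which gives $i \leq n+1$ and forces $X \cong \bbP^n$ when $i = n+1$; in that case $\calE_X \cong \calO_{\bbP^n}(1)^{\oplus(n+1)}$ is a direct sum of line bundles of equal slope, hence polystable and in particular semistable. I would then dispose of the index-one case directly: if $i = 1$, Corollary B already yields slope stability of $\calE_X$, which implies semistability.

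With the cases $i = 1$ and $i = n+1$ handled, it remains to treat $1 < i < n+1$ for $n \in \{2,3\}$. Enumerating, the admissible pairs are $(n,i) \in \{(2,2),(3,2),(3,3)\}$. For each I would verify the two hypotheses of Theorem A. The divisibility hypothesis holds in every case: for $(2,2)$ and $(3,3)$ we have $i \mid n$, so part (a) is applicable, while for $(3,2)$ we have $i \mid n+1$ since $2 \mid 4$, so part (b) is applicable. For the stability hypothesis on $\calT_X$, I would invoke \cite[Theorem 3(1)]{PW95}, which guarantees slope stability of the tangent bundle whenever $i \geq n-1$; checking the three pairs gives $2 \geq 1$, $2 \geq 2$, and $3 \geq 2$ respectively, so $\calT_X$ is slope stable, hence also slope semistable, in each instance.

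Applying Theorem A then finishes the argument: in the cases $(2,2)$ and $(3,3)$ part (a) gives slope stability of $\calE_X$, and in the case $(3,2)$ part (b) gives slope semistability, so $\calE_X$ is at least slope semistable in all cases. There is no genuine obstacle here beyond the bookkeeping, since all of the mathematical content is supplied by Theorem A, Corollary B, and the two cited results \cite{KO72,PW95}; the only work of this corollary is to confirm that their hypotheses are met across the short list of low-dimensional possibilities.
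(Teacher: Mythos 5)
Your proposal is correct and follows essentially the same route as the paper: dispose of $i = n+1$ via Kobayashi--Ochiai and the polystable Euler-type extension on $\bbP^n$, of $i = 1$ via Corollary B, and then run the remaining pairs $(n,i) \in \{(2,2),(3,2),(3,3)\}$ through Theorem A using \cite[Theorem 3(1)]{PW95} (valid since $i \geq n-1$ in each case) for stability of $\calT_X$. The case bookkeeping, including which part of Theorem A applies to each pair, matches the paper's argument exactly.
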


In particular,
we give a different proof for \cite[Corollary 6.16]{GKP22}.

\subsection{Acknowledgment}

{
	I would like to thank my advisor Julius Ross for setting this project
	and for discussions about this project.
	I am also grateful to Yeqin Liu for fruitful conversations.
	Moreover,
	I would like to thank Daniel Greb for bringing their article to our attention and his advice.
}

\section{Proof of the main result} \label{sec:pfofmainresult}

Let $X$ be a smooth Fano variety of dimension $n$.
By this we mean that
the anticanonical line bundle $-K_X$ on $X$ is ample.
Define the \textit{index} of $X$ as
the largest integer $i$ such that $-K_X = \calL^{\otimes i}$ for some ample line bundle $\calL$ on $X$.

We define the canonical extension $\calE_X$ on $X$ as the extension
\begin{equation} \label{eqn:E_X}
	0 \to \calO_X \to \calE_X \to \calT_X \to 0.
	\tag{$\dagger$}
\end{equation}
defined by the extension class $c_1(X) \in H^{1,1}(X) \cong H^1(X,\gO^1_X) \cong \operatorname{Ext}^1(\calT_X,\calO_X)$.

We will assume $X$ has Picard rank one
and let $\calL \in \Pic X$ be the ample generator of $\Pic (X)$.
Then,
by the \textit{degree} of a coherent sheaf $\calF$ on $X$,
we mean the integer $\deg (\calF)$ such that $c_1(\calF) = \deg (\calF) \cdot c_1(\calL)$.
Clearly we have $\deg (K_X) = -i$
and $\deg (\calT_X) = \deg (\calE_X) = i$.

To study stability of the canonical extension $\calE$,
we consider its proper subsheaves.
The following lemma states that their degrees are all smaller than the index $i$.

\begin{lem} \label{lem:subbdlofE_X}
	Let $\calF \subsetneq \calE_X$ be a proper subsheaf.
	Then $\deg (\calF) < i$.
\end{lem}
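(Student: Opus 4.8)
The plan is to turn a subsheaf into a section of a twist of an exterior power of $\calE_X$ and then kill that section with vanishing theorems. First I would reduce to the case that $\calF$ is saturated in $\calE_X$: replacing $\calF$ by its saturation does not decrease $\deg(\calF)$, and a proper saturated subsheaf has rank $r$ with $1\le r\le n$, since a saturated subsheaf of full rank must be all of $\calE_X$. For such $\calF$ the determinant $\det\calF$ is a line bundle, so $\det\calF\cong\calL^{\deg(\calF)}$, and the inclusion $\calF\hookrightarrow\calE_X$ induces a nonzero map $\det\calF\to\Lambda^r\calE_X$ (it is the inclusion of a subbundle off a closed set of codimension $\ge 2$, and $\Lambda^r\calE_X$ is reflexive). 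Thus $H^0(X,\Lambda^r\calE_X\otimes\calL^{-\deg(\calF)})\neq 0$, and it suffices to prove that
$$
H^0\!\left(X,\Lambda^r\calE_X\otimes\calL^{-d}\right)=0\qquad\text{whenever } d\ge i \text{ and } 1\le r\le n .
$$

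To compute the left-hand side I would use the two-step filtration of $\Lambda^r\calE_X$ attached to \eqref{eqn:E_X}: because $\calO_X$ is a line bundle, the $r$-th exterior power sits in
$$
0\to \Lambda^{r-1}\calT_X\to \Lambda^r\calE_X\to \Lambda^r\calT_X\to 0 .
$$
Twisting by $\calL^{-d}$ and passing to cohomology, it is enough to show $H^0(\Lambda^s\calT_X\otimes\calL^{-d})=0$ for $s\in\{r-1,r\}$. Writing $\Lambda^s\calT_X\cong\gO^{\,n-s}_X\otimes\det\calT_X=\gO^{\,n-s}_X\otimes\calL^{i}$ and setting $m:=d-i\ge 0$, the whole question reduces to the vanishing of $H^0(X,\gO^{\,p}_X\otimes\calL^{-m})$ for $0\le p\le n$.

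For $m\ge 1$ all of these vanish: the Akizuki--Nakano vanishing theorem (applied to the ample $\calL^{m}$) gives $H^0(\gO^{\,p}_X\otimes\calL^{-m})=0$ for $p<n$, while for $p=n$ one has $H^0(K_X\otimes\calL^{-m})=H^0(\calL^{-i-m})=0$ directly. The only delicate situation is $m=0$, i.e.\ $d=i$, where $\Lambda^s\calT_X\otimes\calL^{-i}=\gO^{\,n-s}_X$. Since $X$ is Fano, Kodaira vanishing together with Hodge symmetry gives $h^{p,0}(X)=0$ for $p\ge 1$; this annihilates both graded pieces except for the single surviving term $H^0(\gO^{0}_X)=H^0(\calO_X)=\bbC$, which appears as the right-hand quotient exactly when $r=n$.

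The hard part is thus the borderline pair $(r,d)=(n,i)$, where the two graded pieces do not both vanish and one must check that the leftover class dies in the long exact sequence. I would dispatch it by the canonical isomorphism $\Lambda^n\calE_X\cong\calE_X^\vee\otimes\det\calE_X=\calE_X^\vee\otimes\calL^{i}$, which turns the target into $H^0(\calE_X^\vee)=\Hom(\calE_X,\calO_X)$. Dualizing \eqref{eqn:E_X} yields $0\to\gO^1_X\to\calE_X^\vee\to\calO_X\to 0$, with associated sequence
$$
0\to H^0(\gO^1_X)\to \Hom(\calE_X,\calO_X)\to H^0(\calO_X)\xrightarrow{\ \partial\ } H^1(\gO^1_X) .
$$
Here $H^0(\gO^1_X)=0$ because $X$ is Fano, and $\partial(1)$ is precisely the class of the dual extension, namely $\pm c_1(X)$ under $\Ext^1(\calO_X,\gO^1_X)\cong H^1(\gO^1_X)$. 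As $-K_X$ is ample we have $c_1(X)\neq 0$, so $\partial$ is injective and $\Hom(\calE_X,\calO_X)=0$. This is the one and only step that uses the defining property of the canonical extension — that its extension class is the nonzero class $c_1(X)$ — and I expect it to be the crux of the whole argument; everything else is exterior algebra and standard vanishing.
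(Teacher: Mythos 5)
Your proof is correct and follows the same basic strategy as the paper's --- turn the subsheaf into a nonzero section of a twisted exterior power, reduce along the filtration $0 \to \Lambda^{r-1}\calT_X \to \Lambda^{r}\calE_X \to \Lambda^{r}\calT_X \to 0$ induced by \eqref{eqn:E_X}, and kill the section with vanishing theorems, the class $c_1(X)$ entering only at one borderline point --- but several of your choices genuinely streamline the paper's version. The paper works dually: it passes to $\calF' := \big(\coker(\calF \subset \calE_X)\big)^{\vee} \subset \calE_X^{\vee}$ and splits into three cases according to $\rank(\calF')$ and $d' = d - i$, whereas your undualized formulation merges the paper's Cases 1 and 2 into the single uniform vanishing $H^0(X, \gO_X^p \otimes \calL^{-m}) = 0$ and isolates the unique genuine borderline $(r,d) = (n,i)$, which is exactly the paper's Case 3. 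For the delicate twist-zero vanishing $H^0(X,\gO_X^p) = 0$ for $p \geq 1$, the paper invokes Koll\'ar's theorem $H^0(Y, (\gO_Y)^{\otimes m}) = 0$ for separably rationally connected $Y$, together with rational connectedness of Fano varieties; your argument via Kodaira vanishing plus Hodge symmetry is more elementary and entirely sufficient here, since only exterior powers, never full tensor powers, of $\gO_X^1$ occur. At the borderline, the paper argues that a section of $\calE_X^{\vee}$ either factors through $\gO_X^1$ (excluded by $H^0(X,\gO_X^1) = 0$) or splits the dual sequence (excluded by nontriviality of the extension); your observation that the connecting map sends $1 \in H^0(X,\calO_X)$ to $\pm c_1(X) \neq 0$ is the same dichotomy repackaged as injectivity of $\partial$, with the virtue of exhibiting precisely where the hypothesis that the extension class is the nonzero class $c_1(X)$ is used. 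One caveat you share with the paper: your saturation step silently discards full-rank proper subsheaves (whose saturation is all of $\calE_X$), and for those the strict inequality can genuinely fail when $n \geq 2$ --- the kernel of a surjection from $\calE_X$ onto a skyscraper sheaf at a point has degree exactly $i$; the paper's case analysis likewise tacitly assumes $\rank(\calF') \geq 1$, i.e.\ $\rank(\calF) \leq n$. This restriction is harmless for the application in Theorem \ref{thm:stabofE_X}, where it suffices to test stability against saturated subsheaves.
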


Before giving the proof,
we note the following immediate consequence of the lemma.

\begin{cor} \label{cor:i=1}
	Let $X$ be a smooth Picard-rank-$1$ Fano variety of dimension $n$ and index $1$.
	Then the canonical extension $\calE_X$ is slope stable with respect to $-K_X$.
\end{cor}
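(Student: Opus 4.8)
The plan is to deduce the corollary directly from Lemma~\ref{lem:subbdlofE_X}, using crucially that the index equals $1$. First I would record the numerical invariants of the canonical extension. Since $\rank(\calO_X)=1$ and $\rank(\calT_X)=n$, the extension~$(\dagger)$ gives $\rank(\calE_X)=n+1$, while $\deg(\calE_X)=i=1$ as noted above. Because $i=1$ we have $-K_X=\calL$, so for any coherent sheaf $\calF$ with $c_1(\calF)=\deg(\calF)\,c_1(\calL)$ the slope is
\[
\mu_{-K_X}(\calF)=\frac{c_1(\calF)\cdot c_1(\calL)^{n-1}}{\rank(\calF)}=\frac{\deg(\calF)}{\rank(\calF)}\,c_1(\calL)^n .
\]
As $c_1(\calL)^n>0$ is a fixed positive number, testing slope stability amounts to comparing the normalized quantities $\deg(\calF)/\rank(\calF)$; in particular the one attached to $\calE_X$ is $\tfrac{1}{n+1}>0$.

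Next I would take an arbitrary nonzero proper subsheaf $0\neq\calF\subsetneq\calE_X$ and bound its normalized slope from above. Being an extension of the locally free sheaves $\calO_X$ and $\calT_X$, the sheaf $\calE_X$ is itself locally free, hence torsion free; therefore its subsheaf $\calF$ is torsion free as well and satisfies $\rank(\calF)\geq 1$. Lemma~\ref{lem:subbdlofE_X} yields $\deg(\calF)<i=1$, and since $\deg(\calF)$ is an integer this forces $\deg(\calF)\leq 0$. Combining the two bounds gives
\[
\frac{\deg(\calF)}{\rank(\calF)}\leq 0<\frac{1}{n+1},
\]
that is $\mu_{-K_X}(\calF)<\mu_{-K_X}(\calE_X)$. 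As $\calF$ was an arbitrary nonzero proper subsheaf, this is precisely slope stability of $\calE_X$ with respect to $-K_X$.

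I do not expect any genuine obstacle here, since all the real work is carried by Lemma~\ref{lem:subbdlofE_X}; the only things to get right are the reduction to comparing $\deg/\rank$, which is exactly where the hypothesis $i=1$ (equivalently $-K_X=\calL$) enters, and the passage from the strict inequality $\deg(\calF)<1$ to $\deg(\calF)\leq 0$ via integrality of the degree, together with the observation that $\calE_X$ has strictly positive normalized slope. It is worth flagging why this shortcut is special to $i=1$: for larger index the lemma only gives $\deg(\calF)\leq i-1$, and comparing $(i-1)/\rank(\calF)$ against $i/(n+1)$ no longer closes by itself, which is the reason the cases $i\mid n$ and $i\mid n+1$ of Theorem~A need the separate argument passing to the image of $\calF$ in $\calT_X$ rather than following immediately from the lemma.
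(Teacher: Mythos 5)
Your proof is correct and follows essentially the same route as the paper: invoke Lemma~\ref{lem:subbdlofE_X} with $i=1$ to conclude every nonzero proper subsheaf $\calF\subsetneq\calE_X$ has $\deg(\calF)\leq 0$, hence non-positive slope, while $\mu_{-K_X}(\calE_X)>0$. The only difference is that you spell out the integrality step and the proportionality of $\mu_{-K_X}$ to $\deg/\rank$, which the paper leaves implicit.
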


\begin{proof}
	Since the index is $1$,
	by the lemma,
	all proper subsheaves $\calF \subsetneq \calE_X$ has non-positive degrees.
	Consequently,
	their slopes are also non-positive
	and hence must be less than the slope of $\calE_X$,
	which is positive.
\end{proof}

Now we prove Lemma \ref{lem:subbdlofE_X}.

\begin{proof}[Proof of Lemma \ref{lem:subbdlofE_X}.]
	Write $d := \deg (\calF)$.
	Arguing by contradiction,
	suppose that $d \geq i$.
	Let $\calF'$ be the dual of the cokernel of the inclusion $\calF \subset \calE_X$.
	Then $\calF'$ is a subsheaf of the dual $\calE_X^{\vee}$ of $\calE_X$.
	Note that $\calF'$ is torsion free since $\calE_X^{\vee}$ is.
	Let $d'$ be the degree of $\calF'$.
	We have $d' = d - i \geq 0$.
	
	From here,
	we divide matters into three cases according to the rank and the degree of $\calF'$.
	
	\textit{Case 1.}
	If the rank of $\calF'$ is greater than $1$,
	write $r := \rank (\calF')$.
	The inclusion $\calF' \subset \calE_X^{\vee}$ induces
	an inclusion $\det(\calF') = \calL^{d'} \subseteq \bigwedge \nolimits ^{\!r} \calE_X^{\vee}$
	and hence a nonzero element in $\Hom(\calL^{d'} , \bigwedge \nolimits ^{\!r} \calE_X^{\vee}) \cong H^0 (X , \bigwedge \nolimits ^{\! r} \calE_X^{\vee} \otimes \calL^{-d'})$.
	The claim is that this cohomology group is in fact zero,
	which gives a contradiction.
	
	To see this,
	note that the short exact sequence (\ref{eqn:E_X}) defining $\calE_X$ induces the following short exact sequence
	$$
	0 \to \calO_X \otimes \bigwedge \nolimits ^{\! r-1} \calT_X \to \bigwedge \nolimits ^{\! r} \calE_X \to \bigwedge \nolimits ^{\! r} \calT_X \to 0 \, ,
	$$
	where the third arrow $\bigwedge \nolimits ^{\! r} \calE_X \to \bigwedge \nolimits ^{\! r} \calT_X$ is induced by $\calE_X \to \calT_X$.
	Dualize this short exact sequence,
	tensor it with $\calL^{-d'}$,
	and take the long exact sequence of cohomology groups.
	We obtain
	$$
	0 \to H^0 (X , \gO_X^r \otimes \calL^{-d'}) \to H^0 \left(X , \bigwedge \nolimits ^{\! r} \calE_X^{\vee} \otimes \calL^{-d'} \right) \to H^0 (X , \gO_X^{r-1} \otimes \calL^{-d'}) \to \cdots
	$$
	We are done if the cohomology groups on the two sides are zero.
	Thus, we want $H^0 (X , \gO_X^k \otimes \calL^{-d'}) = 0$ for all $0 < k \leq n$.
	
	For $k = n$,
	we have
	$H^0 (X , \gO_X^n \otimes \calL^{-d'}) = H^0 (X , \calL^{-i-d'}) = 0$
	by Kodaira vanishing theorem.
	
	For $0 < k < n$, 
	following the proof of \cite[Lemma 2.2]{Iye14},
	we refer to the following vanishing theorems.
	\begin{itemize}
		\item[(1).]
		(Nakano vanishing theorem, see e.g. \cite[Theorem 4.2.3]{Laz04}).
		Let $Y$ be a smooth projective variety of dimension $n$ over $\bbC$,
		and $A$ an ample divisor on $X$.
		Then $H^p (Y , \gO_Y^q (-A)) = 0$ for $p + q < n$.
		\item[(2).]
		\cite[Corollary 3.8]{Kol96}
		Let $Y$ be a smooth projective variety over an algebraically closed field.
		Assume that $Y$ is separably rationally connected.
		Then $H^0 (Y , (\gO_Y)^{\otimes m}) = 0$ for all $m > 0$.
		In particular,
		$H^0 (Y , \gO_Y^m) = 0$ for all $m > 0$
	\end{itemize}
	
	Note that Fano varieties are rationally connected \cite{KMM92},
	and that rational connectedness is equivalent to separable rational connectedness in characteristic $0$ \cite[Proposition 3.3]{Kol96}.
	Therefore,
	these vanishing theorems give us that
	$H^0 (X , \gO_X^k \otimes \calL^{-d'}) = 0$
	for all $0 < k < n$,
	which concludes \textit{Case 1}.
	
	\textit{Case 2.}
	If $\rank (\calF') = 1$ and $d' > 0$,
	we have $\calF' = \calL^{d'}$.
	Note that $\calE_X^{\vee}$ fits in the following short exact sequence
	obtained by dualizing the short exact sequence (\ref{eqn:E_X}) defining $\calE_X$.
	$$
	0 \to \gO_X^1 \to \calE_X^{\vee} \to \calO_X \to 0
	$$
	Compose the inclusion $\calL^{d'} \subset \calE_X^{\vee}$ with the third arrow $\calE_X^{\vee} \to \calO$.
	This composition is necessarily zero,
	since $\Hom (\calL^{d'} , \calO_X) \cong H^0(X , \calL^{-d'}) = 0$
	by Kodaira vanishing theorem.
	This implies that the inclusion $\calL^{d'} \subset \calE_X^{\vee}$ factors through the map $\gO_X^1 \to \calE_X^{\vee}$,
	and hence $\calL^{d'}$ is a subsheaf of $\gO_X^1$.
	However,
	by Nakano vanishing theorem ((1) in \textit{Case 1}),
	we have $\Hom (\calL^{d'} , \gO_X^1) \cong H^0(X , \gO_X^1 \otimes \calL^{-d'}) = 0$.
	Therefore we get a contradiction.
	
	\textit{Case 3.}
	If $\rank (\calF') = 1$ and $d' = 0$,
	we have $\calF' = \calO_X$.
	Similarly to \textit{Case 2},
	compose the inclusion $\calO_X \subset \calE_X^{\vee}$ with the map $\calE_X^{\vee} \to \calO_X$.
	Note that $\Hom (\calO_X , \calO_X) \cong H^0 (X , \calO_X) \cong \bbZ$.
	Thus,
	this composition is either zero or an isomorphism.
	However,
	if it is zero,
	the inclusion $\calO_X \subset \calE_X^{\vee}$ factors through the map $\gO_X^1 \to \calE_X^{\vee}$.
	We get that $\calO_X$ is a subsheaf of $\gO_X^1$.
	This contradicts the vashing theorem (2) in \textit{Case 1},
	which says that $H^0 (X , \gO_X^1) = 0$.
	On the other hand,
	if the composition is an isomorphism,
	we get that the short exact sequence (\ref{eqn:E_X}) splits.
	This is again a contradiction,
	since $\calE_X$ is a non-trivial extension.
\end{proof}

With this lemma,
we prove our main result.

\begin{thm} \label{thm:stabofE_X}
	Let $X$ be a smooth Picard-rank-$1$ Fano variety of dimension $n$ and index $i$.
	\begin{itemize}
		\item[(a).]
		Suppose $i$ divides $n$
		and that the tangent bundle $\calT_X$ is slope stable with respect to $-K_X$.
		Then the canonical extension $\calE_X$ is slope stable with respect to $-K_X$.
		
		\item[(b).]
		Suppose $i$ divides $n+1$
		and that the tangent bundle $\calT_X$ is slope semistable with respect to $-K_X$.
		Then the canonical extension $\calE_X$ is slope semistable with respect to $-K_X$.
	\end{itemize}
\end{thm}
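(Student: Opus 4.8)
The plan is to test slope stability directly on an arbitrary proper nonzero subsheaf $\calF \subsetneq \calE_X$, and to play the degree bound of Lemma \ref{lem:subbdlofE_X} against the (semi)stability of $\calT_X$, using the divisibility hypothesis to run an integrality argument. Since $-K_X = \calL^{\otimes i}$, one computes $\mu_{-K_X}(\calF) = \big(i^{n-1}\, c_1(\calL)^n\big)\,\hat\mu(\calF)$, where $\hat\mu(\calF) := \deg(\calF)/\rank(\calF)$; as the prefactor is a fixed positive number, every slope comparison reduces to a comparison of $\hat\mu$. Here $\hat\mu(\calE_X) = i/(n+1)$ and $\hat\mu(\calT_X) = i/n$, and the role of the hypotheses is precisely that $i\mid n$ makes $\hat\mu(\calT_X) = 1/m$ with $n = im$, while $i\mid (n+1)$ makes $\hat\mu(\calE_X) = 1/\ell$ with $n+1 = i\ell$.

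First I would decompose $\calF$ along the sequence (\ref{eqn:E_X}): writing $\calK := \calF \cap \calO_X$ and $\calG := \im(\calF \to \calT_X)$, there is a short exact sequence $0 \to \calK \to \calF \to \calG \to 0$ with $\calK \subseteq \calO_X$ (so $\rank\calK \le 1$ and $\deg\calK \le 0$) and $\calG \subseteq \calT_X$. Ranks and degrees add, giving $\deg\calF = \deg\calG + \deg\calK \le \deg\calG$ and $\rank\calF = \rank\calG + \rank\calK$. If $\calG = 0$ then $\calF \subseteq \calO_X$ has rank one and $\deg\calF \le 0$, so $\hat\mu(\calF) \le 0 < \hat\mu(\calE_X)$ and there is nothing to prove. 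The substantive situation is $\calG \neq 0$, which I split according to whether $\rank\calK$ is $0$ or $1$.

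The crux is the case $\calK = 0$, where the projection restricts to an isomorphism $\calF \xrightarrow{\ \sim\ } \calG \subseteq \calT_X$, so $\hat\mu(\calF) = \hat\mu(\calG)$. Here $\calG$ is necessarily a proper subsheaf of $\calT_X$: if $\calG = \calT_X$, the inverse isomorphism would be a section of $\calE_X \to \calT_X$, splitting (\ref{eqn:E_X}), which is impossible since the extension class $c_1(X)$ is nonzero (as $-K_X$ is ample). This is exactly where the naive ``image is destabilizing'' argument stalls, because (semi)stability of $\calT_X$ only yields $\hat\mu(\calG) \le \hat\mu(\calT_X) = i/n$, which exceeds the target $i/(n+1)$. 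I would close this gap by combining $\hat\mu(\calG) \le i/n$ with the Lemma's bound $\deg\calF \le i-1$ and the divisibility. For (b), writing $n = i\ell - 1$, semistability reads $\deg\calF\,(i\ell - 1) \le i\,\rank\calF$, i.e. $i(\ell\deg\calF - \rank\calF) \le \deg\calF \le i - 1 < i$; integrality then forces $\ell\deg\calF - \rank\calF \le 0$, which is precisely $\hat\mu(\calF) \le 1/\ell = \hat\mu(\calE_X)$. For (a) the same manipulation with $n = im$ and the \emph{strict} stability bound $\hat\mu(\calG) < 1/m$ gives $\rank\calF \ge m\deg\calF + 1$, whence $i(\rank\calF - m\deg\calF) \ge i > i-1 \ge \deg\calF$, which rearranges to the strict inequality $\hat\mu(\calF) < i/(n+1)$.

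Finally, the case $\rank\calK = 1$ should follow the same template with a harmless rank shift: now $\rank\calF = \rank\calG + 1$ and $\deg\calF \le \deg\calG$, so the bound on $\calG$ obtained as above (of the form $\ell\deg\calG \le \rank\calG + 1 = \rank\calF$ in case (b), and its strict analogue in case (a), using $\deg\calG < i$ from (semi)stability) transfers to $\calF$ through $\deg\calF \le \deg\calG$ and yields $\hat\mu(\calF) \le \hat\mu(\calE_X)$ in (b), respectively the strict inequality in (a). The only extra point is the degenerate possibility $\calG = \calT_X$, which I would dispatch immediately from the Lemma: then $\rank\calF = n+1$ and $\deg\calF \le i-1$ give $\hat\mu(\calF) \le (i-1)/(n+1) < i/(n+1)$. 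I expect the main obstacle to be exactly the $\calK = 0$ case above, namely reconciling the ``too weak'' bound $i/n$ coming from $\calT_X$ with the sharper target $i/(n+1)$; this is possible only because the divisibility turns one of the two slopes into a unit fraction and the Lemma supplies the strict inequality $\deg\calF < i$ needed to run the integrality argument.
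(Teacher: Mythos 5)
Your proposal is correct and is essentially the paper's proof run in the contrapositive direction: you use the same three ingredients --- the degree bound $\deg(\calF) < i$ of Lemma \ref{lem:subbdlofE_X}, the kernel--image decomposition of $\calF$ along $0 \to \calO_X \to \calE_X \to \calT_X \to 0$, and the integrality argument supplied by the divisibility hypothesis --- merely applying (semi)stability of $\calT_X$ to the image $\calG$ first and rounding afterwards, whereas the paper assumes $\calF$ destabilizes $\calE_X$, rounds first to get $r \le dn/i$, and then exhibits $\calG$ as a destabilizer of $\calT_X$. The only substantive addition is your explicit handling of the edge case $\calG = \calT_X$ (non-splitting of the extension when $\calK = 0$, the Lemma applied to the full-rank $\calF$ when $\rank\calK = 1$), which the paper's argument renders vacuous because $r \le dn/i$ together with $d < i$ forces the image to be a proper subsheaf; your parenthetical justification ``$\deg\calG < i$ from semistability'' in case (b) is slightly off (semistability only gives $\deg\calG \le i$ when $\rank\calG = n$), but the weaker bound already suffices for your inequality $\ell\deg\calG \le \rank\calG + 1$, so nothing breaks.
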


\begin{proof}
	For simplicity, define
	$$
	\mu(\calF) := \frac{\deg (\calF)}{\rank (\calF)}
	$$
	for torsion-free sheaves $\calF$ on $X$.
	It is easy to see that $\mu$ is proportional to the slope $\mu_{-K_X}$ with respect to $-K_X$ in the sense that
	$$
	\mu_{-K_X} (\calF)
	= \frac{c_1(\calF) \cdot c_1(-K_X)^{n - 1}}{\rank(\calF)}
	= \mu (\calF) \cdot \big( i^{n-1} \cdot c_1(\calL)^n \big) .
	$$
	The number $i^{n-1} \cdot c_1(\calL)^n$ is positive,
	so we may replace $\mu_{-K_X}$ by $\mu$ in the definition of slope stability with respect to $-K_X$.
	
	We first prove part (a).
	Suppose the canonical extension $\calE_X$ is not slope stable.
	Then there exists a destablizing subsheaf $\calF \subset \calE_X$ with
	$$
	\mu(\calF) \geq \mu(\calE_X) \, .
	$$
	Recall that $\deg (\calE_X) = i$
	and $\rank (\calE_X) = n + 1$.
	Let $d := \deg (\calF)$ and $r := \rank (\calF)$.
	Then,
	we may rewrite this inequality as
	$$
	\frac{d}{r} \geq \frac{i}{n+1} \,.
	$$
	This implies
	$$
	r \leq \frac{d (n+1)}{i} = \frac{dn}{i} + \frac{d}{i} \,.
	$$
	Note that $dn/i$ is an integer since we assume $i$ divides $n$,
	and that $0 < d/i < 1$ by Lemma \ref{lem:subbdlofE_X}.
	The rank $r$ of $\calF$ is also an integer,
	so we must have
	\begin{equation} \label{eqn:r}
		r \leq \frac{dn}{i} \,. \tag{$*$}
	\end{equation}
	Now,
	consider the image $\gb (\calF) \subset \calT_X$ of $\calF$ under the map $\gb : \calE_X \to \calT_X$.
	Let $\calK$ be the kernel of $\gb|_{\calF}$.
	We get the following short exact sequence.
	$$
	0 \to \calK \to \calF \to \gb (\calF) \to 0
	$$
	The kernel $\calK$ is a subsheaf of $\calO_X$.
	By Kodaira vanishing theorem,
	$\calK$ is either $0$ or $\calL^{-a}$ for some $a \geq 0$.
	Thus,
	we have either $\mu \big( \gb (\calF) \big) = \mu (\calF)$ or
	$$
	\mu \big( \gb (\calF) \big)
	= \frac{\deg (\calF) - (-a)}{\rank (\calF) - 1}
	> \frac{\deg (\calF)}{\rank (\calF)}
	= \mu (\calF)
	$$
	In particular,
	we get $\mu \big( \gb (\calF) \big) \geq \mu (\calF)$.
	Then,
	combining this with the inequality (\ref{eqn:r}),
	we obtain
	$$
	\mu \big( \gb (\calF) \big) \geq \mu (\calF) = \frac{d}{r} \geq \frac{i}{n} = \mu (\calT_X) \, .
	$$
	Therefore,
	$\calT_X$ is not slope stable,
	and we have proved part (a).
	
	The proof for part (b) is similar.
	Suppose the canonical extension $\calE_X$ is not slope semistable.
	Then there exists a destablizing subsheaf $\calF \subset \calE_X$ with $\mu(\calF) > \mu(\calE_X)$,
	which gives the inequality
	$$
	r < \frac{d (n+1)}{i} \, .
	$$
	Here,
	$d(n+1)/i$ is an integer since we assume $i$ divides $n+1$,
	so we have
	$$
	r \leq \frac{d (n+1)}{i} - 1 < \frac{dn}{i} \,.
	$$
	Then the image $\gb (\calF) \subset \calT_X$ satisfies
	$$
	\mu \big( \gb (\calF) \big) \geq \mu (\calF) = \frac{d}{r} > \frac{i}{n} = \mu (\calT_X),
	$$
	and hence $\calT_X$ is not slope semistable.
\end{proof}

\bibliographystyle{ytamsalpha}
\bibliography{ref}

\end{document}